\documentclass[11pt]{article}

\usepackage[letterpaper,left=1.25in,right=1.25in,top=1in,bottom=1in]{geometry}
\usepackage{amsmath,amssymb,amsthm}
\usepackage{setspace}
\usepackage{microtype}
\usepackage{needspace}
\usepackage{hyperref}

\allowdisplaybreaks
\newtheorem{theorem}{Theorem}[section]
\newtheorem{conjecture}[theorem]{Conjecture}
\newtheorem{lemma}[theorem]{Lemma}
\newtheorem{claim}{Claim}[section]
\newtheorem*{hittingrestatement}{Theorem~\ref{thm:hitting}}
\newtheorem*{mainrestatement}{Theorem~\ref{thm:main}}
\numberwithin{equation}{section}

\hypersetup{
  hidelinks,
  pdfauthor={Xinheng Lin},
  pdftitle={On separators in K\_t-minor-free graphs},
  pdfsubject={Graph minors and balanced vertex separators},
  pdfkeywords={Alon-Seymour-Thomas conjecture, graph minors, balanced separators,
    connected hitting sets, optimal transport}
}

\title{On separators in $K_t$-minor-free graphs}
\author{Xinheng Lin\\[1.4ex]
  \normalsize Center for Discrete Mathematics, Fuzhou University\\[-0.1ex]
  \normalsize Fuzhou, P.\ R.\ China}
\date{October 27, 2026}

\begin{document}

\setlength{\abovedisplayskip}{6pt plus 2pt minus 2pt}
\setlength{\belowdisplayskip}{6pt plus 2pt minus 2pt}
\setlength{\abovedisplayshortskip}{0pt plus 2pt}
\setlength{\belowdisplayshortskip}{3pt plus 2pt minus 1pt}

\maketitle

\begin{abstract}
This manuscript presents a proof of the Alon--Seymour--Thomas conjecture
developed under the author's guidance using ChatGPT 6pro.
The author has checked every formula and theorem by hand and used Codex
to assist with writing. GPT 6.1 Sol (Max) was also used to check the proof
repeatedly and to generate numerous examples for verification.
More precisely, for every integer $h\geq3$, every
$n$-vertex $K_h$-minor-free graph has a vertex set of size at most
$\min\{n,21h\sqrt n\}$ whose deletion leaves every component with at most
$2n/3$ vertices.

\noindent\textbf{Key Words:} Alon--Seymour--Thomas conjecture, graph minors,
balanced separators, connected hitting sets, optimal transport
\end{abstract}

\footnotetext[1]{2020 Mathematics Subject Classification: 05C40, 05C83.\\
Email: \href{mailto:2500310007@fzu.edu.cn}{2500310007@fzu.edu.cn}}

\section{Introduction}\label{sec:introduction}

All graphs in this paper are finite and simple. For graphs $H$ and $G$,
we write $H\preccurlyeq_{\mathrm m}G$ if $H$ is a minor of $G$. We denote
by $K_h$ the complete graph on $h$ vertices, and say that $G$ is
$K_h$-minor-free if $K_h\not\preccurlyeq_{\mathrm m}G$. A
\emph{balanced separator} of an $n$-vertex graph $G$ is a set
$S\subseteq V(G)$ such that every component of $G-S$ has at most
$2n/3$ vertices.

In 1990, Alon, Seymour and Thomas~\cite{alon-seymour-thomas} proved that
every $n$-vertex $K_h$-minor-free graph has a balanced separator of size
$O(h^{3/2}\sqrt n)$. They proposed the following conjecture.

\Needspace{5\baselineskip}
\begin{conjecture}[Alon--Seymour--Thomas~\cite{alon-seymour-thomas}]
\label{conj:ast}
There exists a constant $C>0$ such that, for every integer $h\geq3$,
every $n$-vertex $K_h$-minor-free graph has a balanced separator of size
at most $Ch\sqrt n$.
\end{conjecture}

Plotkin, Rao and Smith~\cite{plotkin-rao-smith} obtained a bound of
$O(h\sqrt{n\log n})$. Kawarabayashi and Reed~\cite{kawarabayashi-reed}
proved a bound of \mbox{$O(h\sqrt n+f(h))$}, where $f$ is a rapidly growing
function arising from the graph minor structure theorem.
Biswal, Lee and Rao~\cite{biswal-lee-rao} obtained a bound of
$O(h^3\sqrt{n\log h})$ using flow-cut duality.
Spalding-Jamieson~\cite{spalding-jamieson} improved this to
$O(h\log h\log\log h\sqrt n)$.
An intermediate bound of $O(h\log h\sqrt n)$ is recorded
in~\cite[Theorem~1]{le}. Bonnet et al.~\cite{bonnet-et-al} obtained
separators of size $O(h^{13}\sqrt n)$ in $O(h^{13}n)$ time, which is
linear in $n$ for fixed $h$.

Le~\cite{le} proved a bound of $O(h\sqrt{n\log h})$ by incorporating a
low-diameter decomposition into the framework of Alon, Seymour and
Thomas. We prove the following explicit form of
Conjecture~\ref{conj:ast}.

\Needspace{5\baselineskip}
\begin{theorem}\label{thm:main}
Let $h\geq3$ be an integer, and let $G$ be a $K_h$-minor-free graph on
$n$ vertices. Then $G$ has a balanced separator $S$ such that
\[
  |S|\leq\min\{n,21h\sqrt n\}.
\]
\end{theorem}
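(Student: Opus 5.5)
We follow the Alon--Seymour--Thomas framework of growing a ``covey'' of disjoint connected sets, but replace their counting step by a connected hitting set lemma (the paper's Theorem~\ref{thm:hitting}, which we assume is stated later). The trivial case comes first. If $21h\sqrt n\ge n$, take $S=V(G)$. So assume $n>441h^2$.

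The reduction to hitting sets runs as follows. Fix a weight $w\equiv1$ and suppose $G$ has no balanced separator of size at most $21h\sqrt n$. A standard argument shows that any set $X$ with $|X|\le 21h\sqrt n$ leaves a component $C_X$ of $G-X$ with more than $2n/3$ vertices; this component is the ``big side'' of $X$. We maintain a family $\mathcal A$ of pairwise touching connected subgraphs (a bramble-like covey). At most $h-1$ of them can be pairwise disjoint and pairwise adjacent, since otherwise they form a $K_h$ minor. The key step is then to show the following: given a covey $\mathcal A=\{A_1,\dots,A_k\}$ with $k\le h-2$ and $|\bigcup A_i|\le c h\sqrt n$, inside the big side of $\bigcup A_i$ there is a connected set $A_{k+1}$ adjacent to every $A_i$ with $|A_{k+1}|\le c\sqrt n$. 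Iterating $h-1$ times then gives $K_h$, a contradiction. The sum $\sum|A_i|\le (h-1)c\sqrt n\le 21h\sqrt n$ keeps every union within the separator budget, so the big-side structure persists at each step.

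The main obstacle is finding a connected set of size $O(\sqrt n)$ that touches all $A_i$. In AST the Menger argument yields paths of length $\sqrt{n}\cdot h^{1/2}$, which is where the extra $\sqrt h$ comes from. My plan is to use duality and optimal transport. We consider the LP of fractionally hitting all $A_i$ by a connected set, whose dual is a flow/metric. If no connected set of size $c\sqrt n$ meets all $A_i$ within $C_X$, then LP duality in the style of Biswal--Lee--Rao produces a vertex length function $\ell$ with $\sum_v\ell(v)\le$ small such that the $A_i$ are pairwise far apart in the $\ell$-metric. An optimal transport / coarea argument on the distance layers from one $A_i$ then yields a level set of size at most $c'\sqrt n$ that separates a $1/3$-fraction of the vertices. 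This contradicts the absence of small balanced separators. The key quantitative point is a transport bound in which each layer costs $O(\sqrt n)$ rather than $O(\sqrt{n\log h})$; the $\log h$ of Le's approach should be avoided by transporting mass between the $k$ sets simultaneously instead of using a low-diameter decomposition.

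Finally we tune constants. We take the hitting lemma with constant about $20$ per step, and the remaining slack up to $21$ absorbs the error in the separator threshold $2n/3$ versus $1/2$. I expect the middle step, the $O(\sqrt n)$ connected hitting set with no $h$-dependence, to be the genuine difficulty. I would attempt it first for $k=2$, where it is just Menger, and then test whether the transport argument generalizes.
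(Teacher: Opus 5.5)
There is a genuine gap at exactly the step you flag. The key step is not merely unproved; it is false in the form you use it. You want every small pairwise-touching covey $A_1,\dots,A_k$ to admit a connected set of size $O(\sqrt n)$, inside the big component of $G-\bigcup_iA_i$, adjacent to every $A_i$. You plan to derive this from the implication ``no short connected hitting set $\Rightarrow$ a small level set cutting off a third of the vertices''. That implication fails, as the following graph shows.
\begin{itemize}
\item Take a bounded-degree expander $E$ on about $4n/5$ vertices.
\item Hang two paths of length $n/10$ from two vertices of $E$.
\item Add an edge $a_1a_2$, with $a_i$ joined to the free end of the $i$th path.
\end{itemize}
Your process may choose $A_1=\{a_1\}$ and $A_2=\{a_2\}$. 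Any connected set in the big side touching both must then contain both paths, so it has at least $n/5$ vertices. Yet $G$ has no balanced separator of size $o(n)$, because deleting $o(n)$ vertices of $E$ leaves a component with more than $2n/3$ vertices. Your derivation of the key step never uses $K_h$-minor-freeness, so it cannot exclude this. More generally, once later sets are added, an old member can lose all its neighbors in the big side, so a covey cannot be grown monotonically to $h$ members. (Also, with $k\le h-2$ your iteration reaches only $h-1$ members, i.e.\ $K_{h-1}$.)

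The paper's proof is not a contradiction argument toward $K_h$. It maintains a triple $(H,X,\mathcal K)$ and always performs one of three moves:
\begin{itemize}
\item[(a)] add a connected $T\subseteq V(H)$ with $|T|\le13h\ell$ meeting every $N_H(C)$;
\item[(b)] absorb into some $C\in\mathcal K$ a set $Z$ with $|Z|\le v(H)/2$ and $|N_H(C\cup Z)|\le13h\ell+(|C|+|Z|)/\ell$;
\item[(c)] delete a non-expanding set $W$ and put $N_H(W)$ into $X$.
\end{itemize}
Move (b) is what swallows the pendant paths above. Branch sets with no neighbor in the new big component are simply dropped.

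The bookkeeping then works as follows:
\begin{itemize}
\item The progress measure is $v(H)$; minor-freeness only caps $|\mathcal K|\le h-1$.
\item Branch sets may become huge. The final separator pays $\min\{|C|,|N_H(C)|\}\le13h\ell+|C|/\ell$ per branch set.
\item $|X|\le n/\ell$, because the successive sets $W$ are disjoint.
\item With $\ell=\sqrt n/h$ this gives $|S|\le13h(h+1)\ell+3n/\ell<21h\sqrt n$.
\end{itemize}

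Even granting Theorem~\ref{thm:hitting}, your reduction mismatches it in two ways.
\begin{itemize}
\item Its transport estimate needs targets of size at least $v(H)/h$. For tiny targets such as raw neighborhoods $N_H(C_i)$, the term $\sum_x(a(x)-b(x))^2$ is far too large. The paper gets large targets from the failure of (b): the balls $B_{\lceil\ell\rceil}(C_i)$ grow by more than $13h\ell$ per layer, so they reach size $\min\{v(H)/2,13h\ell^2\}\ge v(H)/h$. The hitting set for these balls is then extended by paths of length at most $\lceil\ell\rceil$ to reach each $N_H(C_i)$.
\item Its alternative outcome is a non-expanding set $W$, which is typically far from balanced. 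It is paid for cumulatively in $X$; it is not a balanced separator as your sketch assumes.
\end{itemize}
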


The proof of Theorem~\ref{thm:main} is based on the original argument of
Alon, Seymour and Thomas~\cite[Theorem~1.2]{alon-seymour-thomas}
and Le's refinement~\cite[Theorem~2]{le}.

For a graph $H$ and a set $W\subseteq V(H)$, let
\[
  N_H(W):=\{v\in V(H)\setminus W:
    \text{$v$ has a neighbor in $W$}\}
\]
denote the external neighborhood of $W$.
The proof of Theorem~\ref{thm:main} uses the following theorem.

\Needspace{8\baselineskip}
\begin{theorem}\label{thm:hitting}
Let $H$ be a connected graph on $N\geq1$ vertices, let $h\geq3$ be an
integer, and let $\ell\geq1$ be real. Let
$A_1,\ldots,A_h\subseteq V(H)$ satisfy $|A_i|\geq N/h$ for every
$i\in[h]$. Then at least one of the following holds:
\begin{enumerate}
\item there is a nonempty set $W\subseteq V(H)$ such that
$|W|\leq N/2$ and $|N_H(W)|\leq |W|/\ell$;
\item there is a nonempty set $T\subseteq V(H)$ such that $H[T]$ is
connected, $T\cap A_i\neq\varnothing$ for every $i\in[h]$, and
$|T|\leq11h\ell$.
\end{enumerate}
Note that the sets $A_i$ need not be disjoint or connected, and repetitions are
allowed.
\end{theorem}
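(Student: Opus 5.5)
We will prove the dichotomy by contrapositive. Assume that alternative 1 fails, so every nonempty $W\subseteq V(H)$ with $|W|\le N/2$ satisfies $|N_H(W)|>|W|/\ell$. From this we construct a small connected set $T$ meeting every $A_i$. The approach follows Le's refinement of the Alon--Seymour--Thomas argument. Rather than growing BFS balls around each $A_i$ separately and paying a $\log h$ factor, we build one tree greedily and charge its cost against the expansion of $H$ via a transport/flow argument.

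\textbf{Step 1: expansion gives small distances between large sets.} Let $B_r(X)$ denote the ball of radius $r$ around $X$. If $|B_r(X)|\le N/2$, then
\[
|B_{r+1}(X)|\ge(1+1/\ell)|B_r(X)|.
\]
Hence any two sets $X,Y$ with $|X|,|Y|\ge s$ are at distance at most about $2\ell\ln(N/(2s))+2$. Applying this naively to $A_i$ with $s=N/h$ gives the $\ell\log h$ loss, which we must avoid.

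\textbf{Step 2: sum the radii and telescope.} We grow a connected set $T_0\subseteq T_1\subseteq\cdots$ and attach a new class at each stage. At stage $j$, let $k$ be the number of classes not yet hit. We connect $T_j$ to the nearest unhit $A_i$ by a shortest path. The key is to measure distance not from $T_j$ alone but from the union $U$ of all unhit classes.

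Balls around $T_j$ do not grow under the expansion hypothesis unless they are large. So we reverse the roles and use balls around $U$. The set $U$ has size at least $N/h$, and its $r$-ball reaches size $N/2$ within $r\approx \ell\ln(h/2)$. A better accounting uses ball growth from each unhit class simultaneously. Define $r_i$ as the radius at which $B_{r}(A_i)$ first touches $T_j$. Then the first $A_i$ is hit at cost $\min_i r_i$.

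We will instead use a continuous version. Assign mass $1/h$ to each class, i.e.\ a measure $\mu_i$ uniform on $A_i$ of total mass $1$. The expansion hypothesis, by max-flow/min-cut duality (the optimal transport of the keywords), gives a fractional routing: any set of measure $m\le 1/2$ of the uniform distribution can be transported to its complement with cost about $\ell$ per unit of mass, when cost is measured in edges times vertex congestion. Concretely, the Cheeger-type condition $|N(W)|>|W|/\ell$ yields a vertex flow from $A_i$ (size $\ge N/h$) to any set of size $\ge N/2$ with vertex congestion $O(\ell)$ per unit density.

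\textbf{Step 3: the greedy tree.} We iteratively add the class whose connection cost is smallest. The path found at a stage with $k$ remaining classes should have length $O(\ell h/k)$-ish \emph{on average}. Summing over $k$ would still give $\ell h\log h$. So the telescoping must instead exploit that once $T$ is large, say $|T|=t$, every unhit class $A_i$ lies at distance about $\ell\cdot O(1)$ from $T$ in an amortized sense. The bound $11h\ell$ suggests a cost of $O(\ell)$ per class, meaning a constant distance per class in units of $\ell$.

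The plan is the following. Grow $T$ as a ball-like connected set. Whenever $|B_\ell$-type neighbourhood$(T)|$ fails to intersect a class, use expansion of the complement side. Since $|A_i|\ge N/h$ and the total mass $\sum_i|A_i|\ge N$, averaging shows that a uniformly random vertex lies in at least one class in expectation. Comparing $|T|$ growth with the number of classes hit, by a potential function $\Phi=|T|-c\ell\cdot\#\{\text{hit classes}\}$, should give $|T|\le 11h\ell$.

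\textbf{Main obstacle.} The crux is Step 3: proving that the marginal path length needed to hit a new class is $O(\ell)$ amortized, rather than $O(\ell\log(h/k))$. I would first try a fractional argument. Choose a random vertex $v$ from $\mu=\frac1h\sum_i\mu_i$, and route from $v$ to $T$ using the flow from Step 2. Then show that the expected length-times-density is $O(\ell)$, and extract a deterministic short path to a new class by averaging over the classes not yet hit. Handling the case $|T|>N/2$ (where expansion is applied to the complement) and fixing the constant $11$ is where the bookkeeping is delicate.
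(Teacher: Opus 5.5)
There is a genuine gap: the proposal never establishes what the theorem actually asserts. Everything rests on Step~3, the claim that each new class can be attached at amortised cost $O(\ell)$. There you only assert that a potential $|T|-c\ell\cdot(\text{number of hit classes})$ \emph{should give} $|T|\le11h\ell$, with no inequality controlling it.

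Moreover, the deterministic greedy tree cannot be analysed with the ball-growth tools of Steps~1--2. The hypothesis only yields geometric growth of balls (equivalently, geometric shrinking of their complements). So bounding the distance from the current tree $T_j$ to the unhit classes forces you to grow a ball around $T_j$ as well. Since $|T_j|=O(h\ell)$, this costs about $\ell\ln(N/|T_j|)$, which is not bounded in terms of $h$ and $\ell$.

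This is a real obstruction, not bookkeeping. Take a bounded-degree expander, so $\ell=O(1)$. Let $h-1$ of the classes be the $\lceil N/h\rceil$ vertices nearest to some $v$, and let the last class be the $\lceil N/h\rceil$ vertices farthest from $v$. A greedy tree started at $v$ hits the first $h-1$ classes for free and then pays distance $\Omega(\log N)$. Your Step~2 flow statement, even if made precise, controls congestion rather than path length, and it is never turned into a length bound.

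What is missing is a way to randomise the current point and to measure each step in $L^2$; this is what the paper does.

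\begin{enumerate}
\item The expansion hypothesis, applied to level sets above and below a median, gives $\sum_v f(v)^2\le6N\ell^2$ for every $1$-Lipschitz $f$ with zero sum.
\item Kantorovich duality turns this into $D_H(a,b)\le\sqrt{6N}\,\ell\,\|a-b\|_2$ for probability vectors $a,b$.
\item Let $a_i$ be uniform on $A_i$, and let $a_S$, $a_{S^c}$ be the averages of the $a_i$ over $S$ and over $[h]\setminus S$. Averaging over all $S$ with $|S|=q$ gives mean $\|a_S-a_{S^c}\|_2^2\le h^2/(Nq(h-q))$, hence mean transport cost at most $\sqrt6\,h\ell/\sqrt{q(h-q)}$.
\item Chain optimal plans $a_{S^c}\to a_S$ in a layered flow on pairs $(S,x)$: at the new point $y$, the label $i\in S$ is removed with probability proportional to $a_i(y)$. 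Then the current point, once the classes in $S^c$ have been visited, is distributed exactly as $a_{S^c}$.
\item A path decomposition of this flow yields sequences $x_1,\dots,x_h$ meeting every $A_i$, with average total length at most $\sqrt6\,h\ell\sum_{q=1}^{h-1}(q(h-q))^{-1/2}\le4\sqrt6\,h\ell$.
\item Joining consecutive points by geodesics gives $|T|\le1+4\sqrt6\,h\ell<11h\ell$.
\end{enumerate}

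Your closing fractional idea points in this direction. But without the $L^2$ transport bound and the averaging over random subsets, nothing makes the per-step costs sum to $O(h\ell)$ rather than $O(h\ell\log h)$. The case $|T|>N/2$ that you anticipate plays no role.
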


Section~\ref{sec:preliminaries} contains the expansion estimate, finite
transport duality, and the average transport estimate used in the proof.
We prove Theorem~\ref{thm:hitting} in Section~\ref{sec:hitting} and
Theorem~\ref{thm:main} in Section~\ref{sec:separator}.

\clearpage
\section{Preliminaries}\label{sec:preliminaries}

We use largely standard graph-theoretic notation. For a graph $G$, we
denote by $V(G)$ and $E(G)$ its vertex set and edge set, respectively, and
write $v(G):=|V(G)|$ and $e(G):=|E(G)|$. For $U\subseteq V(G)$, let
$G[U]$ denote the subgraph induced by $U$, and put
$G-U:=G[V(G)\setminus U]$. For disjoint sets $U,W\subseteq V(G)$, let
$e_G(U,W)$ denote the number of edges with one endpoint in $U$ and the
other in $W$. For a nonnegative integer $m$, let
$[m]:=\{1,\ldots,m\}$, with $[0]:=\varnothing$.

We use the standard branch-set characterization of a complete minor.
Thus $K_h\preccurlyeq_{\mathrm m}G$ if and only if there are pairwise
disjoint nonempty sets $C_1,\ldots,C_h\subseteq V(G)$ such that every
$G[C_i]$ is connected and $e_G(C_i,C_j)>0$ whenever $i\neq j$.
The sets $C_i$ are called the branch sets of the minor model.

For a connected graph $H$, let $d_H(x,y)$ denote the number of edges in
a shortest path from $x$ to $y$. For a nonempty set $A\subseteq V(H)$,
put $d_H(x,A):=\min_{a\in A}d_H(x,a)$. A function
$f:V(H)\to\mathbb R$ is $1$-Lipschitz if
$|f(x)-f(y)|\leq d_H(x,y)$ for all $x,y\in V(H)$. Equivalently,
$|f(x)-f(y)|\leq1$ whenever $xy\in E(H)$.

The first lemma bounds the sum of squares of such a function under a
vertex expansion assumption.

\Needspace{6\baselineskip}
\begin{lemma}\label{lem:variance}
Let $H$ be a connected graph on $N\geq1$ vertices, and let $\ell\geq1$
be real. Suppose that every nonempty set $W\subseteq V(H)$ with
$|W|\leq N/2$ satisfies $|N_H(W)|>\frac{|W|}{\ell}$.
If $f:V(H)\to\mathbb R$ is $1$-Lipschitz and
$\sum_{v\in V(H)}f(v)=0$, then
$\sum_{v\in V(H)}f(v)^2\leq6N\ell^2$.
\end{lemma}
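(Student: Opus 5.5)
Shifting $f$ by a constant only increases $\sum f^2$, since $\sum_v (f(v)-c)^2=\sum_v f(v)^2+Nc^2$ when $\sum f=0$. So it suffices to find some $c$ with $\sum_v (f(v)-c)^2\le 6N\ell^2$. Take $c$ to be a median of $f$: every $v$ has at most $N/2$ vertices strictly above $c$ and at most $N/2$ strictly below. Write $g:=f-c$ and split $g=g^+-g^-$. Then
\[
  \sum_v f(v)^2\le\sum_v g(v)^2=\sum_v (g^+)^2+\sum_v (g^-)^2,
\]
and by symmetry we only need $\sum_v (g^+(v))^2\le 3N\ell^2$.

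The key step is a layer-counting bound for the upper level sets. For $t>0$ let $W_t:=\{v: g(v)\ge t\}$; then $|W_t|\le N/2$. Because $f$ is $1$-Lipschitz, every external neighbor of $W_t$ satisfies $g>t-1$, so
\[
  N_H(W_t)\subseteq W_{t-1}^{\circ}\setminus W_t,\qquad W_s^{\circ}:=\{g>s\}.
\]
Apply the hypothesis to the nonempty $W_t$, discretizing along $t=\tau,\tau+1,\tau+2,\dots$ for a fixed offset $\tau\in(0,1]$. Set $a_k:=|\{g\ge \tau+k\}|$. The expansion gives $a_{k-1}\ge a_k+a_k/\ell$, hence $a_k\le a_0(1+1/\ell)^{-k}\le \frac N2(1+1/\ell)^{-k}$. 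The strict versus non-strict inequality at the boundary must be handled so that $N_H(W_t)$ really lands in $\{g\ge t-1\}\setminus W_t$; that inclusion holds.

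Next, bound the sum by integrating over level sets:
\[
  \sum_v (g^+(v))^2=\int_0^\infty 2t\,|\{g\ge t\}|\,dt .
\]
Write $t=\tau+k$ with $\tau\in(0,1]$ and average the decay over the offset. Using $|\{g\ge \tau+k\}|\le \frac N2 q^k$ with $q=(1+1/\ell)^{-1}$, we get
\[
  \sum_v (g^+)^2\le \frac N2\sum_{k\ge0}\int_0^1 2(\tau+k)q^k\,d\tau=\frac N2\sum_{k\ge0}(2k+1)q^k .
\]
Now $\sum_k q^k=\ell+1$ and $\sum_k kq^k=q/(1-q)^2=\ell(\ell+1)$. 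So the bound is $\frac N2(2\ell^2+3\ell+1)\le \frac N2\cdot 6\ell^2=3N\ell^2$, using $\ell\ge1$. Doubling for $g^-$ gives $6N\ell^2$.

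The main obstacle is getting the constants to close. The geometric decay only sets in at the first level, and the integer layering must use exact $\ge t$ sets, not a cruder rounding that loses a factor. If $2\ell^2+3\ell+1\le6\ell^2$ were tight somewhere, one could instead use $|\{g\ge t\}|\le N/2$ trivially for $t\in(0,1]$. As computed, though, the bound holds with room to spare for all $\ell\ge1$.
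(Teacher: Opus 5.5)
Your proof is correct and is essentially the paper's argument: shift to a median, apply the expansion hypothesis to superlevel sets spaced one unit apart to get decay ratio $\ell/(\ell+1)$ on each side, and sum $\sum_{k\ge0}(2k+1)q^k=(2\ell+1)(\ell+1)\le 6\ell^2$. The differences are cosmetic. You use closed level sets $\{g\ge t\}$ with a layer-cake integral averaged over the offset $\tau$ and bound the two tails separately, while the paper uses strict sets $\{f>m+j\}$ with the rounding $\lceil|f(v)-m|\rceil$. One slip: your first displayed inclusion into $\{g>t-1\}$ fails when $g(u)=t$ exactly, but the corrected inclusion $N_H(W_t)\subseteq\{g\ge t-1\}\setminus W_t$, which you state next and actually use, is valid.
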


\begin{proof}
Choose a median $m$ of the values of $f$. Thus at most $N/2$ of these
values are greater than $m$, and at most $N/2$ are less than $m$.
For every integer $j\geq0$, put
$U_j:=\{v\in V(H):f(v)>m+j\}$.
If $u\in U_{j+1}$ and $uv\in E(H)$, then
$f(v)\geq f(u)-1>m+j$. Consequently,
$N_H(U_{j+1})\subseteq U_j\setminus U_{j+1}$.
If $U_{j+1}\neq\varnothing$, then $|U_{j+1}|\leq |U_0|\leq N/2$, so
\[
  |U_j|\geq |U_{j+1}|+|N_H(U_{j+1})|
  >\left(1+\frac1\ell\right)|U_{j+1}|
\]
by the expansion assumption. It follows that
$|U_{j+1}|\leq \ell|U_j|/(\ell+1)$. This weak inequality also holds
when $U_{j+1}=\varnothing$. Induction on $j$ therefore gives
\[
  |\{v:f(v)>m+j\}|
  \leq\frac N2\left(\frac{\ell}{\ell+1}\right)^j
  \qquad(j=0,1,2,\ldots).
\]
Applying the same bound to $-f$, with median $-m$, gives
\[
  |\{v:f(v)<m-j\}|
  \leq\frac N2\left(\frac{\ell}{\ell+1}\right)^j
  \qquad(j=0,1,2,\ldots).
\]
The upper and lower sets are disjoint. Adding their bounds gives
\begin{equation}\label{eq:median-tail}
  |\{v:|f(v)-m|>j\}|
  \leq N\left(\frac{\ell}{\ell+1}\right)^j
  \qquad(j=0,1,2,\ldots).
\end{equation}

\Needspace{5\baselineskip}
For $v\in V(H)$, put $r_v:=\lceil|f(v)-m|\rceil$.
Since $r_v$ is a nonnegative integer,
$r_v^2=\sum_{j=0}^{r_v-1}{(2j+1)}$,
where the sum is zero if $r_v=0$.
Also, $r_v>j$ if and only if $|f(v)-m|>j$ for every integer $j\geq0$.
Thus
\begin{align*}
  \sum_v r_v^2
  &=\sum_v\sum_{j=0}^{r_v-1}(2j+1)\\*
  &=\sum_{j=0}^{\infty}\sum_{\{v:r_v>j\}}(2j+1)\\*
  &=\sum_{j=0}^{\infty}(2j+1)|\{v:r_v>j\}|.
\end{align*}
Using~\eqref{eq:median-tail}, we obtain
\begin{align*}
  \sum_v(f(v)-m)^2
  &\leq\sum_v r_v^2
   =\sum_{j=0}^{\infty}(2j+1)|\{v:r_v>j\}|\\*
  &\leq N\sum_{j=0}^{\infty}(2j+1)\left(\frac{\ell}{\ell+1}\right)^j.
\end{align*}
Put $t:=\frac{\ell}{\ell+1}$, so $1-t=\frac1{\ell+1}$.
Since $0<t<1$, we have
$\sum_{j=0}^{\infty}t^j=\frac1{1-t}$ and
$\sum_{j=0}^{\infty}jt^j=\frac{t}{(1-t)^2}$.
Hence
\begin{align*}
  N\sum_{j=0}^{\infty}(2j+1)t^j
  &=N\left(2\sum_{j=0}^{\infty}jt^j
             +\sum_{j=0}^{\infty}t^j\right)\\*
  &=N\left(\frac{2t}{(1-t)^2}+\frac1{1-t}\right)\\*
  &=N\bigl(2\ell(\ell+1)+(\ell+1)\bigr)\\*
  &=N(2\ell+1)(\ell+1)\leq6N\ell^2.
\end{align*}
Finally, $\sum_v f(v)=0$ implies
\[
  \sum_v(f(v)-m)^2=\sum_v f(v)^2+Nm^2\geq\sum_v f(v)^2.
\]
The required inequality follows.
\end{proof}

Let $H$ be a connected graph, and let $a,b:V(H)\to[0,\infty)$ satisfy
$\sum_xa(x)=\sum_xb(x)=1$. A transport matrix from $a$ to $b$ is a
nonnegative matrix $P=(P(x,y))_{x,y\in V(H)}$ such that
\[
  \sum_yP(x,y)=a(x)\quad(x\in V(H)),\qquad
  \sum_xP(x,y)=b(y)\quad(y\in V(H)).
\]
Define
\begin{equation}\label{eq:transport-definition}
  D_H(a,b):=\min_P\sum_{x,y\in V(H)}d_H(x,y)P(x,y),
\end{equation}
where the minimum is over all such matrices.
This set is nonempty, since it contains $P(x,y)=a(x)b(y)$.

The next lemma is the finite form of transport duality.

\Needspace{6\baselineskip}
\begin{lemma}[{\cite[Eq.~(6.2)]{peyre-cuturi}}]
\label{lem:transport-duality}
Let $H$ be a connected graph, and let $a,b:V(H)\to[0,\infty)$ satisfy
$\sum_xa(x)=\sum_xb(x)=1$. Then
\[
  D_H(a,b)=
  \max_{\substack{f:V(H)\to\mathbb R\\ f\text{ is }1\text{-Lipschitz}}}
  \sum_{x\in V(H)}f(x)(a(x)-b(x)).
\]
\end{lemma}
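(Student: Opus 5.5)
The statement is Lemma~\ref{lem:transport-duality}, the Kantorovich--Rubinstein duality on the finite metric space $(V(H),d_H)$. I would prove it by linear programming duality.

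\textbf{Weak duality.} The inequality ``$\geq$'' is immediate. Let $P$ be any transport matrix from $a$ to $b$ and let $f$ be any $1$-Lipschitz function. Then
\[
\sum_x f(x)\bigl(a(x)-b(x)\bigr)=\sum_{x,y}\bigl(f(x)-f(y)\bigr)P(x,y)\leq\sum_{x,y}d_H(x,y)P(x,y).
\]
The first equality uses the marginal constraints, and the inequality uses $P\geq0$ together with the Lipschitz condition. Taking the minimum over $P$ and the supremum over $f$ gives ``$\geq$''.

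\textbf{Attainment.} Both optima are attained. The set of transport matrices is a nonempty compact polytope, so the minimum exists. The objective $\sum_x f(x)(a(x)-b(x))$ is unchanged by adding a constant to $f$, because $\sum_x a=\sum_x b$. So we may restrict to $1$-Lipschitz $f$ with $f(x_0)=0$ for a fixed vertex $x_0$. Since $H$ is connected, this set is compact, being bounded by $|f(x)|\leq d_H(x,x_0)$. Hence the maximum also exists.

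\textbf{Strong duality.} For ``$\leq$'', write the primal as the LP: minimize $\sum d(x,y)P(x,y)$ subject to $P\geq0$, with row sums $a$ and column sums $b$. Its LP dual is: maximize $\sum_x u(x)a(x)+\sum_y v(y)b(y)$ subject to $u(x)+v(y)\leq d(x,y)$ for all $x,y$. The primal is feasible and bounded below by $0$, so LP strong duality gives equal optimal values, attained by some pair $(u,v)$.

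\textbf{Reduction to a single Lipschitz function.} This is the main, though still standard, step. Given a dual optimal pair $(u,v)$, define the $c$-transform
\[
f(x):=\min_y\bigl(d(x,y)-v(y)\bigr).
\]
Then $f$ has the following properties.
\begin{itemize}
\item $f\geq u$ pointwise, by the dual constraint $u(x)+v(y)\leq d(x,y)$.
\item $f$ is $1$-Lipschitz, being a minimum of $1$-Lipschitz functions of $x$ (triangle inequality).
\item $f(x)+v(y)\leq d(x,y)$, by the definition of $f$.
\item $f(y)\leq d(y,y)-v(y)=-v(y)$, so $v\leq -f$.
\item $-f(y)\leq d(x,y)-f(x)$, using $1$-Lipschitzness.
\end{itemize}
Since $a,b\geq0$, it follows that
\[
\sum u\,a+\sum v\,b\leq\sum f\,a-\sum f\,b.
\]
Therefore the maximum over $1$-Lipschitz $f$ of $\sum_x f(x)(a(x)-b(x))$ is at least the dual optimum, which equals $D_H(a,b)$. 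Combined with weak duality, this gives equality.

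All the ingredients are standard. The only point needing care is checking $v\leq -f$ and using nonnegativity of $b$ there. Alternatively, one can simply cite the reference, as the paper does.
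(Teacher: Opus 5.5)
Your proof is correct: weak duality, LP strong duality for the Kantorovich problem, and the $c$-transform that replaces an optimal dual pair $(u,v)$ by $(f,-f)$ with $f$ $1$-Lipschitz together give the identity. The nonnegativity of $a$ and $b$ is exactly what converts $u\le f$ and $v\le -f$ into the final inequality, and your compactness remarks cover attainment of both extrema. The paper gives no argument of its own and simply cites Peyr\'e--Cuturi, where this formula comes from duality plus $c$-transforms, so your write-up is a self-contained version of the cited result. Your fifth bullet, which shows that $(f,-f)$ is dual feasible, is true but not needed.
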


\Needspace{6\baselineskip}
\begin{lemma}\label{lem:transport-bound}
Let $H$ be a connected graph on $N\geq1$ vertices, and let $\ell\geq1$.
Suppose that every nonempty set $W\subseteq V(H)$ with $|W|\leq N/2$
satisfies $|N_H(W)|>\frac{|W|}{\ell}$.
If
$a,b:V(H)\to[0,\infty)$ satisfy $\sum_xa(x)=\sum_xb(x)=1$, then
\[
  D_H(a,b)\leq\sqrt{6N}\,\ell
       \left(\sum_x(a(x)-b(x))^2\right)^{1/2}.
\]
\end{lemma}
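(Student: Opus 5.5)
By Lemma~\ref{lem:transport-duality}, I will fix a $1$-Lipschitz function $f:V(H)\to\mathbb R$ attaining the maximum, so that
\[
  D_H(a,b)=\sum_x f(x)\bigl(a(x)-b(x)\bigr).
\]
The idea is to center $f$ and then apply Cauchy--Schwarz, with Lemma~\ref{lem:variance} controlling the $\ell^2$ norm of the centered function.

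First, I will normalize $f$. Since $\sum_x(a(x)-b(x))=1-1=0$, replacing $f$ by $g:=f-c$ for any constant $c$ does not change $\sum_x f(x)(a(x)-b(x))$. Taking $c:=\frac1N\sum_x f(x)$ gives $\sum_x g(x)=0$, and $g$ is still $1$-Lipschitz because subtracting a constant preserves all differences $|g(x)-g(y)|=|f(x)-f(y)|$.

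Next, Cauchy--Schwarz gives
\[
  D_H(a,b)=\sum_x g(x)\bigl(a(x)-b(x)\bigr)
  \leq\Bigl(\sum_x g(x)^2\Bigr)^{1/2}\Bigl(\sum_x(a(x)-b(x))^2\Bigr)^{1/2}.
\]
The hypotheses of Lemma~\ref{lem:variance} hold verbatim for $H$, $\ell$ and $g$: $H$ is connected on $N$ vertices, every nonempty $W$ with $|W|\leq N/2$ has $|N_H(W)|>|W|/\ell$, and $g$ is $1$-Lipschitz with mean zero. That lemma therefore gives $\sum_x g(x)^2\leq 6N\ell^2$. Taking square roots yields the factor $\sqrt{6N}\,\ell$, which is the claimed bound.

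There is no real obstacle here. The only points to check are that the maximum in Lemma~\ref{lem:transport-duality} is attained, which the statement asserts (and a supremum would serve equally well), and that shifting by a constant preserves both the objective and the Lipschitz property. Both are immediate.
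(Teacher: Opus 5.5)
Your proof is correct and is essentially the paper's argument: you center the $1$-Lipschitz function, bound its sum of squares by Lemma~\ref{lem:variance}, apply Cauchy--Schwarz, and invoke Lemma~\ref{lem:transport-duality}. The only cosmetic difference is that the paper bounds the objective for an arbitrary $1$-Lipschitz $f$ and then takes the maximum, whereas you fix a maximizer first.
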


\begin{proof}
Let $f$ be any $1$-Lipschitz function. The function
$x\mapsto f(x)-\bigl(\sum_y f(y)\bigr)/N$ is also $1$-Lipschitz and has
sum zero. Lemma~\ref{lem:variance} gives
\[
  \sum_x\left(f(x)-\biggl(\sum_y f(y)\biggr)/N\right)^2
  \leq6N\ell^2.
\]

\Needspace{8\baselineskip}
Since $\sum_x(a(x)-b(x))=0$, the Cauchy--Schwarz inequality gives
\begin{align*}
  \sum_xf(x)(a(x)-b(x))
  &=\sum_x\left(f(x)-\biggl(\sum_y f(y)\biggr)/N\right)
    (a(x)-b(x))\\*
  &\leq
    \left(\sum_x\left(f(x)-\biggl(\sum_y f(y)\biggr)/N\right)^2\right)^{1/2}
    \left(\sum_x(a(x)-b(x))^2\right)^{1/2}\\*
  &\leq\sqrt{6N}\,\ell
    \left(\sum_x(a(x)-b(x))^2\right)^{1/2}.
\end{align*}
Taking the maximum over $f$ and applying
Lemma~\ref{lem:transport-duality} proves the lemma.
\end{proof}

\Needspace{12\baselineskip}
\begin{lemma}\label{lem:average-transport}
Let $H$ be a connected graph on $N\geq1$ vertices, let $h\geq3$ be an
integer, and let $\ell\geq1$ be real. Suppose that every nonempty set
$W\subseteq V(H)$ with $|W|\leq N/2$ satisfies $|N_H(W)|>\frac{|W|}{\ell}$.
Let
$A_1,\ldots,A_h\subseteq V(H)$ satisfy $|A_i|\geq N/h$.
For $i\in[h]$, define
\[
  a_i(x):=
  \begin{cases}
    1/|A_i|,&x\in A_i,\\
    0,&x\notin A_i.
  \end{cases}
\]
For every nonempty $S\subseteq[h]$ and every $x\in V(H)$, put
$a_S(x):=\bigl(\sum_{i\in S}a_i(x)\bigr)/|S|$, and let
$S^c:=[h]\setminus S$. Then, for every integer $q$ with $1\leq q\leq h-1$,
\begin{equation}\label{eq:average-transport}
  \frac1{\binom{h}{q}}\sum_{\substack{S\subseteq[h]\\|S|=q}}
  D_H(a_{S^c},a_S)
  \leq\frac{\sqrt6\,h\ell}{\sqrt{q(h-q)}}.
\end{equation}
\end{lemma}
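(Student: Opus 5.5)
Apply Lemma~\ref{lem:transport-bound} to each pair $(a_{S^c},a_S)$, then average over $S$, using Jensen (concavity of the square root) to pass to an average of squared $\ell^2$ distances, which can be computed exactly by symmetry. Concretely, for each $S$ with $|S|=q$, Lemma~\ref{lem:transport-bound} gives
\[
  D_H(a_{S^c},a_S)\leq\sqrt{6N}\,\ell\,\bigl\|a_{S^c}-a_S\bigr\|_2 .
\]
Both $a_S$ and $a_{S^c}$ are probability vectors, since $1\le q\le h-1$ makes $S$ and $S^c$ nonempty. Averaging and applying Cauchy--Schwarz (or Jensen) yields
\[
  \frac1{\binom hq}\sum_{|S|=q}D_H(a_{S^c},a_S)
  \leq\sqrt{6N}\,\ell\left(\frac1{\binom hq}\sum_{|S|=q}\|a_{S^c}-a_S\|_2^2\right)^{1/2}.
\]
It therefore suffices to show that the average squared distance is at most $h^2/(N q(h-q))$.

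Write $\bar a:=a_{[h]}=\frac1h\sum_i a_i$ and $c_i:=a_i-\bar a$, so $\sum_i c_i=0$. Since $\sum_{i\in S}c_i=-\sum_{i\in S^c}c_i$, we get
\[
  a_S-a_{S^c}=\frac1q\sum_{i\in S}c_i+\frac1{h-q}\sum_{i\in S}c_i=\frac{h}{q(h-q)}\sum_{i\in S}c_i .
\]
The remaining task is to average $\|\sum_{i\in S}c_i\|^2$ over uniform $q$-subsets. Expanding gives diagonal terms $\|c_i\|^2$ with weight $q/h$ and off-diagonal terms $\langle c_i,c_j\rangle$ with weight $q(q-1)/(h(h-1))$. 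Using $\sum_{i\neq j}\langle c_i,c_j\rangle=-\sum_i\|c_i\|^2$, this is the standard sampling-without-replacement identity
\[
  \mathbb E\Bigl\|\sum_{i\in S}c_i\Bigr\|^2=\frac{q(h-q)}{h(h-1)}\sum_i\|c_i\|^2 .
\]
Hence the average squared distance equals
\[
  \frac{h}{q(h-q)(h-1)}\sum_i\|c_i\|^2 .
\]

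Finally, bound $\sum_i\|c_i\|^2\le\sum_i\|a_i\|^2=\sum_i 1/|A_i|\le h\cdot h/N$, using $|A_i|\ge N/h$. This gives an average squared distance of at most
\[
  \frac{h^3}{N q(h-q)(h-1)} .
\]
Plugging in, the average of $D_H$ is at most
\[
  \sqrt6\,\ell\, h\sqrt{\frac{h}{h-1}}\,\frac{1}{\sqrt{q(h-q)}} .
\]
This carries an extra factor $\sqrt{h/(h-1)}$, so the crude bound $\sum\|c_i\|^2\le\sum\|a_i\|^2$ is not quite enough.

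The main obstacle is removing this factor. First, I would use the exact identity $\sum_i\|c_i\|^2=\sum_i\|a_i\|^2-h\|\bar a\|^2$ together with $\|\bar a\|^2\ge 1/N$, which holds by Cauchy--Schwarz since $\bar a$ is a probability vector on $N$ points. This gives
\[
  \sum_i\|c_i\|^2\le \frac{h^2}{N}-\frac hN=\frac{h(h-1)}N .
\]
The factor $(h-1)$ then cancels exactly, so the average squared distance is at most $h^2/(N q(h-q))$. This yields precisely the bound $\sqrt6\,h\ell/\sqrt{q(h-q)}$ in \eqref{eq:average-transport}.
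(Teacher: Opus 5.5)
Your proposal is correct and follows the paper's proof essentially step for step. It uses the same centering $a_i-\bar a$, the same identity $a_S-a_{S^c}=\frac{h}{q(h-q)}\sum_{i\in S}(a_i-\bar a)$, the same sampling-without-replacement average, the same sharp bound $\sum_i\sum_x(a_i(x)-\bar a(x))^2\leq h(h-1)/N$ via $\sum_x\bar a(x)^2\geq1/N$, and the same Cauchy--Schwarz over $S$ combined with Lemma~\ref{lem:transport-bound}.
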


\begin{proof}
Fix $q$ with $1\leq q\leq h-1$.
All sums indexed by $x$ in this proof are over $V(H)$.
Every $A_i$ is nonempty, and the definition of $a_i$ gives
\begin{equation}\label{eq:target-mass-bounds}
  \sum_xa_i(x)=1,\qquad
  \sum_xa_i(x)^2=\frac1{|A_i|}\leq\frac hN.
\end{equation}
For every $x\in V(H)$, put
\[
  \bar a(x):=\frac1h\sum_{i=1}^ha_i(x),\qquad
  b_i(x):=a_i(x)-\bar a(x)\quad(i\in[h]).
\]
Then $\sum_{i=1}^hb_i(x)=0$ for every $x\in V(H)$, and
$\sum_x\bar a(x)=1$.
Expanding the squares gives
\begin{align}
  \sum_{i=1}^h\sum_x b_i(x)^2
  &=\sum_{i=1}^h\sum_x a_i(x)^2
    -2\sum_x\bar a(x)\sum_{i=1}^ha_i(x)
    +h\sum_x\bar a(x)^2\notag\\*
  &=\sum_{i=1}^h\sum_x a_i(x)^2
    -h\sum_x\bar a(x)^2
   \leq\frac{h^2}{N}-\frac hN
   =\frac{h(h-1)}{N}.\label{eq:centered-target-bound}
\end{align}
The second equality uses $\sum_{i=1}^ha_i(x)=h\bar a(x)$.
The inequality follows from~\eqref{eq:target-mass-bounds} and
$\sum_x\bar a(x)^2\geq1/N$, where the latter bound follows from the Cauchy--Schwarz inequality
$1=(\sum_x\bar a(x))^2\leq N\sum_x\bar a(x)^2$.

For every $q$-element set $S\subseteq[h]$ and every $x\in V(H)$,
\begin{equation}
  \label{eq:complement-difference}
  a_S(x)-a_{S^c}(x)
  =\frac1q\sum_{i\in S}b_i(x)
    -\frac1{h-q}\sum_{i\notin S}b_i(x)
  =\frac{h}{q(h-q)}\sum_{i\in S}b_i(x),
\end{equation}
where the last equality uses
$\sum_{i\notin S}b_i(x)=-\sum_{i\in S}b_i(x)$.

We now average $\sum_x(\sum_{i\in S}b_i(x))^2$ over these $\binom{h}{q}$ sets.
Each label $i$ occurs in
$\binom{h-1}{q-1}=q\binom{h}{q}/h$ of them. Each ordered pair of distinct
labels occurs in $q(q-1)\binom{h}{q}/(h(h-1))$ of them: this number is zero
when $q=1$, and equals $\binom{h-2}{q-2}$ when $q\geq2$.
Since
\[
  \left(\sum_{i\in S}b_i(x)\right)^2
  =\sum_{i\in S}b_i(x)^2
   +\sum_{\substack{i,j\in S\\i\ne j}}b_i(x)b_j(x),
\]
we obtain
\begin{equation}
  \frac1{\binom{h}{q}}\sum_{|S|=q}
       \sum_x\left(\sum_{i\in S}b_i(x)\right)^2
  =\frac qh\sum_i\sum_x b_i(x)^2
  +\frac{q(q-1)}{h(h-1)}
        \sum_{i\neq j}\sum_x b_i(x)b_j(x).
  \label{eq:subset-count}
\end{equation}
Since $\sum_i b_i(x)=0$ for every $x\in V(H)$,
\[
  0=\sum_x\left(\sum_i b_i(x)\right)^2
   =\sum_i\sum_x b_i(x)^2
     +\sum_{i\neq j}\sum_x b_i(x)b_j(x).
\]
Combining this identity with~\eqref{eq:subset-count} gives
\begin{equation}
  \label{eq:subset-square-average}
  \frac1{\binom{h}{q}}\sum_{|S|=q}
       \sum_x\left(\sum_{i\in S}b_i(x)\right)^2
  =\left(\frac qh-\frac{q(q-1)}{h(h-1)}\right)
        \sum_i\sum_x b_i(x)^2
  =\frac{q(h-q)}{h(h-1)}\sum_i\sum_x b_i(x)^2.
\end{equation}
It follows from~\eqref{eq:complement-difference},
\eqref{eq:subset-square-average}, and~\eqref{eq:centered-target-bound}
that
\begin{equation}
  \label{eq:target-square-average}
  \frac1{\binom{h}{q}}\sum_{|S|=q}\sum_x
       (a_S(x)-a_{S^c}(x))^2
  =\frac{h}{q(h-q)(h-1)}
       \sum_i\sum_x b_i(x)^2
  \leq\frac{h^2}{Nq(h-q)}.
\end{equation}

Applying the Cauchy--Schwarz inequality to the sum over $S$ gives
\begin{align*}
  \left(\sum_{|S|=q}1\cdot
      \left(\sum_x(a_S(x)-a_{S^c}(x))^2\right)^{1/2}\right)^2
  &\leq\left(\sum_{|S|=q}1^2\right)
      \left(\sum_{|S|=q}\sum_x(a_S(x)-a_{S^c}(x))^2\right)\\*
  &=\binom{h}{q}\sum_{|S|=q}\sum_x
        (a_S(x)-a_{S^c}(x))^2.
\end{align*}
For every $S$ in this sum, $a_S(x)\geq0$ and $a_{S^c}(x)\geq0$
for every $x\in V(H)$, and
$\sum_xa_S(x)=\sum_xa_{S^c}(x)=1$.
Applying Lemma~\ref{lem:transport-bound} and using~\eqref{eq:target-square-average},
we obtain~\eqref{eq:average-transport}:
\begin{align*}
  \frac1{\binom{h}{q}}\sum_{|S|=q}D_H(a_{S^c},a_S)
  &\leq\frac{\sqrt{6N}\,\ell}{\binom{h}{q}}
          \sum_{|S|=q}
            \left(\sum_x(a_S(x)-a_{S^c}(x))^2\right)^{1/2}\\*
  &\leq\sqrt{6N}\,\ell
       \left(\frac1{\binom{h}{q}}\sum_{|S|=q}
                     \sum_x(a_S(x)-a_{S^c}(x))^2\right)^{1/2}\\*
  &\leq\sqrt{6N}\,\ell
       \left(\frac{h^2}{Nq(h-q)}\right)^{1/2}\\*
  &=\frac{\sqrt6\,h\ell}{\sqrt{q(h-q)}}.\qedhere
\end{align*}
\end{proof}

\clearpage
\section{Proof of Theorem~\ref{thm:hitting}}\label{sec:hitting}

For convenience, we restate Theorem~\ref{thm:hitting} below.

\begin{hittingrestatement}
Let $H$ be a connected graph on $N\geq1$ vertices, let $h\geq3$ be an
integer, and let $\ell\geq1$ be real. Let
$A_1,\ldots,A_h\subseteq V(H)$ satisfy $|A_i|\geq N/h$ for every
$i\in[h]$. Then at least one of the following holds:
\begin{enumerate}
\item there is a nonempty set $W\subseteq V(H)$ such that
$|W|\leq N/2$ and $|N_H(W)|\leq |W|/\ell$;
\item there is a nonempty set $T\subseteq V(H)$ such that $H[T]$ is
connected, $T\cap A_i\neq\varnothing$ for every $i\in[h]$, and
$|T|\leq11h\ell$.
\end{enumerate}
\end{hittingrestatement}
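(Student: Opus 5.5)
If alternative~1 fails, then every nonempty $W\subseteq V(H)$ with $|W|\le N/2$ satisfies $|N_H(W)|>|W|/\ell$. This is exactly the hypothesis of Lemma~\ref{lem:average-transport}. So we may use, for every $1\le q\le h-1$, the averaged transport bound
\[
  \frac1{\binom hq}\sum_{|S|=q}D_H(a_{S^c},a_S)\le\frac{\sqrt6\,h\ell}{\sqrt{q(h-q)}}.
\]
The goal is then a connected set $T$ of size at most $11h\ell$ meeting every $A_i$, obtained by a probabilistic construction whose expected size is controlled by these averages.

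\textbf{A first attempt that is too weak.} One could pick a hub vertex $v$ and take the union of shortest paths from $v$ to each $A_i$. Drawing $v$ from $\bar a$ and comparing with the $q=1$ case gives only $|T|\lesssim h^{3/2}\ell$. The reason is that each label pays $\sim\sqrt h\,\ell$. Likewise, recursive balanced splitting of $[h]$ is dominated by its many small leaves and also loses a $\sqrt h$ factor.

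\textbf{The plan: grow $T$ one label at a time.} Take a uniformly random ordering $\sigma$ of $[h]$ and let $S_q$ be the set of its first $q$ labels, so $S_q$ is a uniform $q$-subset. We maintain a connected set $T_q$ meeting $A_i$ for all $i\in S_q$, together with a random vertex $X_q\in T_q$ whose law (over the vertex sampling and the remaining randomness, conditional on $S_q$) is $a_{S_q}$.

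To pass from $q$ to $q+1$, let $j=\sigma(q+1)$. The target law $a_{S_{q+1}}$ is a mixture:
\[
  a_{S_{q+1}}=\tfrac{q}{q+1}\,a_{S_q}+\tfrac1{q+1}\,a_j .
\]
We couple $X_q$ with a vertex $Y$ of law $a_j$ via an optimal transport matrix. We add to $T_q$ a shortest $X_q$--$Y$ path, which meets $A_j$ and keeps $T_{q+1}$ connected. We then set $X_{q+1}=Y$ with probability $1/(q+1)$, and $X_{q+1}=X_q$ otherwise, which yields the correct mixture law. The expected added length is $\mathbb E\,D_H(a_{S_q},a_j)$.

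\textbf{Relating the increments to the lemma.} Since $j$ is uniform in $S_q^c$, we need to compare $\frac1{h-q}\sum_{j\notin S_q}D_H(a_{S_q},a_j)$ with $D_H(a_{S_q},a_{S_q^c})$. The naive convexity inequality goes the wrong way here. I expect this comparison to be the main obstacle.

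What I would try first is to choose the new label not uniformly but via the optimal plan between $a_{S_q}$ and $a_{S_q^c}$. Concretely, draw a transported pair $(X,Z)$ with $X\sim a_{S_q}$ and $Z\sim a_{S_q^c}$, and let $j$ be a label $i\notin S_q$ with $Z\in A_i$, chosen with probability proportional to $a_i(Z)$. Then $Z$ has the law $a_j$ conditional on $j$, $j$ is uniform on $S_q^c$, and the expected path length is exactly $D_H(a_{S_q^c},a_{S_q})$. The remaining check is that $S_{q+1}$ is still (close enough to) uniform and that the invariant on $X_{q+1}$ survives; if needed, I would only control averages rather than exact laws.

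\textbf{Summing.} Granting the step above, and using that $D_H(a,b)=D_H(b,a)$ because $d_H$ is symmetric, we get
\[
  \mathbb E|T_h|\le1+\sum_{q=1}^{h-1}\frac{\sqrt6\,h\ell}{\sqrt{q(h-q)}}\le1+\sqrt6\,\pi h\ell<11h\ell,
\]
using $\sum_{q=1}^{h-1}(q(h-q))^{-1/2}\le\int_0^h(x(h-x))^{-1/2}\,dx=\pi$ and $\ell\ge1$, $h\ge3$. Some outcome of the random process therefore gives a connected set $T$ meeting every $A_i$ with $|T|\le11h\ell$, which is alternative~2.
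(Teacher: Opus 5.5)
Your final scheme is the paper's transition: couple $X_q$ to $Z\sim a_{S_q^c}$ by an optimal plan $Q$ from $a_{S_q}$ to $a_{S_q^c}$, then pick $j\notin S_q$ with probability proportional to $a_j(Z)$. The weights $w_{S,i}$ in \eqref{eq:layered-edge-weight} encode exactly this, with the paper's $S$ playing the role of your uncollected set $S_q^c$. But the step you defer as ``the remaining check'' is the crux, and with the update rule you wrote it fails. Once $j$ is read off from $Z$, and $Z$ is coupled to $X_q$, the label $j$ is correlated with $X_q$. Given $(S_q,j)$, the law of $X_q$ is $x\mapsto\sum_z Q(x,z)a_j(z)/a_{S_q^c}(z)$, not $a_{S_q}$. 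So the lazy rule ``$X_{q+1}=Z$ with probability $1/(q+1)$, else $X_q$'' need not give $a_{S_{q+1}}$. For example, let $H$ be the path $p_1p_2\cdots p_6$, $h=3$, $A_2=\{p_1,p_2\}$, $A_1=\{p_3,p_4\}$, $A_3=\{p_5,p_6\}$. Every optimal plan from $a_1$ to $a_{\{2,3\}}$ sends $p_3$ into $A_2$ and $p_4$ into $A_3$, so $j=2$ forces $X_1=p_3$. Given $S_2=\{1,2\}$, the histories $(S_1,j)=(\{1\},2)$ and $(\{2\},1)$ are equally likely. The lazy rule then gives $\Pr[X_2=p_3]=\frac12\cdot\frac12+\frac12\cdot\frac12\cdot\frac12=\frac38\neq\frac14=a_{\{1,2\}}(p_3)$. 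From then on the next coupling has the wrong source marginal, $j$ is no longer uniform, and Lemma~\ref{lem:average-transport} no longer bounds the expected cost.

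The repair is to drop the laziness and always move the current vertex: $X_{q+1}:=Z$. Suppose $X_q\mid S_q\sim a_{S_q}$. Then $\Pr[X_q=x,\,Z=z\mid S_q]=Q(x,z)$, so the expected increment is exactly $D_H(a_{S_q},a_{S_q^c})$. Moreover $\Pr[j,\,Z=z\mid S_q]=a_j(z)/(h-q)$. Hence $(S_q,j)$ is uniform over all pairs, so $S_{q+1}$ is exactly uniform, not merely ``close enough''. Given $S_{q+1}=U$, the last label $j$ is uniform on $U$, and therefore $Z$ given $S_{q+1}=U$ has law $\frac1{q+1}\sum_{j\in U}a_j=a_U$. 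Since the transition depends only on $(S_q,X_q)$, this marginal invariant is all you need. This is precisely the incoming-weight computation in Claim~\ref{clm:flow-balance}. With it, your argument becomes the paper's proof in probabilistic form: the law of the random trajectory replaces the explicit path decomposition of Claim~\ref{clm:path-decomposition}. Your estimate $\sum_{q=1}^{h-1}(q(h-q))^{-1/2}\le\pi$ is also correct, because $x\mapsto(x(h-x))^{-1/2}$ is convex and so each term is at most its integral over $[q-\frac12,q+\frac12]$. It gives the constant $1+\sqrt6\,\pi$ in place of the paper's $1+4\sqrt6$.
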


\begin{proof}[Proof of Theorem~\ref{thm:hitting}]
Suppose that the first conclusion fails. Then every nonempty
$W\subseteq V(H)$ with $|W|\leq N/2$ satisfies
$|N_H(W)|>|W|/\ell$.
For every $i\in[h]$ and every $x\in V(H)$, define
\[
  a_i(x):=
  \begin{cases}
    1/|A_i|,&x\in A_i,\\
    0,&x\notin A_i.
  \end{cases}
\]
For $S\subseteq[h]$, write $S^c:=[h]\setminus S$.
For every nonempty $S\subseteq[h]$ and every $x\in V(H)$, put
$a_S(x):=\bigl(\sum_{i\in S}a_i(x)\bigr)/|S|$.
Note that, for every $S\subsetneq[h]$ and every $x\in V(H)$,
$a_{S^c}(x)=\bigl(\sum_{i\notin S}a_i(x)\bigr)/(h-|S|)$.

\Needspace{7\baselineskip}
For every nonempty proper subset $S\subsetneq[h]$, choose a transport
matrix $P_S$ attaining $D_H(a_{S^c},a_S)$. Thus
\begin{equation}\label{eq:optimal-marginals}
  \sum_yP_S(x,y)=a_{S^c}(x),\qquad
  \sum_xP_S(x,y)=a_S(y),\qquad
  \sum_{x,y}d_H(x,y)P_S(x,y)=D_H(a_{S^c},a_S).
\end{equation}
By Lemma~\ref{lem:average-transport}, for every integer $q$ with
$1\leq q\leq h-1$, we have
\[
  \frac1{\binom{h}{q}}\sum_{\substack{S\subseteq[h]\\|S|=q}}
  D_H(a_{S^c},a_S)
  \leq\frac{\sqrt6\,h\ell}{\sqrt{q(h-q)}}.
\]

Let $\mathcal D$ be the following finite directed graph. Its vertex set is
\[
  V(\mathcal D):=
    \{(S,x):S\subsetneq[h],\ x\in V(H)\}.
\]
Note that $S$ may be empty.
For $q=0,\ldots,h-1$, put
$L_q:=\{(S,x)\in V(\mathcal D):|S|=q\}$, and call $L_q$ layer $q$.
For every nonempty $S\subsetneq[h]$, every $i\in S$, and every
$x,y\in V(H)$, include the directed edge
\[
  (S,x)\longrightarrow(S\setminus\{i\},y).
\]
Thus each edge goes from a layer $L_q$ to the next layer $L_{q-1}$.

For $(S,x)\in V(\mathcal D)$, define
\[
  \omega(S,x):=\frac{a_{S^c}(x)}{\binom{h}{|S|}}.
\]
Give the edge $(S,x)\to(S\setminus\{i\},y)$ length $d_H(x,y)$ and weight
\begin{equation}\label{eq:layered-edge-weight}
  w_{S,i}(x,y):=
  \begin{cases}
    \displaystyle\frac{P_S(x,y)a_i(y)}{\binom{h}{q}\sum_{j\in S}a_j(y)},
       &\displaystyle\sum_{j\in S}a_j(y)>0,\\[2mm]
    0,&\displaystyle\sum_{j\in S}a_j(y)=0,
  \end{cases}
  \qquad q:=|S|.
\end{equation}
If $\sum_{j\in S}a_j(y)=0$, then $a_i(y)=0$ for every $i\in S$ and
$P_S(x,y)=0$ for every $x\in V(H)$, by the column-sum condition
in~\eqref{eq:optimal-marginals}.

\begin{claim}\label{clm:flow-balance}
For every $(S,x)\in L_q$ with $1\leq q\leq h-1$, its total outgoing
edge weight is $\omega(S,x)$. For every $(R,y)\in L_{q-1}$ with
$1\leq q\leq h-1$, its total incoming edge weight is $\omega(R,y)$.
\end{claim}

\begin{proof}
Fix $(S,x)\in L_q$. For each $y$ with $\sum_{j\in S}a_j(y)>0$, summing over
$i\in S$ gives
\[
  \sum_{i\in S}w_{S,i}(x,y)
  =\frac{P_S(x,y)}{\binom{h}{q}}
       \frac{\sum_{i\in S}a_i(y)}{\sum_{j\in S}a_j(y)}
  =\frac{P_S(x,y)}{\binom{h}{q}}.
\]
When $\sum_{j\in S}a_j(y)=0$, both the first and the last expressions
are zero. Therefore, by the row-sum condition
in~\eqref{eq:optimal-marginals},
\begin{equation}\label{eq:outgoing-weight}
  \sum_{i\in S}\sum_y w_{S,i}(x,y)
  =\frac1{\binom{h}{q}}\sum_yP_S(x,y)
  =\frac{a_{S^c}(x)}{\binom{h}{q}}
  =\omega(S,x).
\end{equation}

For a fixed $i\in S$ and a vertex $y$ with $\sum_{j\in S}a_j(y)>0$,
the column-sum condition gives
\[
  \begin{aligned}
    \sum_xw_{S,i}(x,y)
    &=\frac{a_i(y)}{\binom{h}{q}\sum_{j\in S}a_j(y)}
      \sum_xP_S(x,y)\\
    &=\frac{a_i(y)}{\binom{h}{q}\sum_{j\in S}a_j(y)}
      \cdot\frac{\sum_{j\in S}a_j(y)}q\\
    &=\frac{a_i(y)}{q\binom{h}{q}}.
  \end{aligned}
\]
When $\sum_{j\in S}a_j(y)=0$, both the first and the last expressions
are zero.

Now fix $R\subseteq[h]$ with $|R|=q-1$.
An edge entering $(R,y)$ must come from
$(R\cup\{i\},x)$ for some $i\notin R$. These are all possible
incoming edges, so, by the preceding equality,
\begin{align}
  \sum_{i\notin R}\sum_xw_{R\cup\{i\},i}(x,y)
  &=\frac1{q\binom{h}{q}}\sum_{i\notin R}a_i(y)\notag\\*
  &=\frac{h-q+1}{q\binom{h}{q}}\,a_{R^c}(y)\notag\\*
  &=\frac{a_{R^c}(y)}{\binom{h}{q-1}}
   =\omega(R,y),\label{eq:incoming-weight}
\end{align}
where we used
$q\binom{h}{q}=(h-q+1)\binom{h}{q-1}$.
This proves both assertions.
\end{proof}

By Claim~\ref{clm:flow-balance}, the total incoming and outgoing
weights are equal at every vertex in $L_1\cup\cdots\cup L_{h-2}$.
Moreover, the total outgoing weight from $L_{h-1}$ is
\begin{equation}\label{eq:total-top-weight}
  \sum_{|S|=h-1}\sum_x\omega(S,x)
  =\frac1h\sum_{i=1}^h\sum_xa_i(x)=1.
\end{equation}

\begin{claim}\label{clm:path-decomposition}
There are directed paths $\gamma_1,\ldots,\gamma_m$ in $\mathcal D$,
each starting in $L_{h-1}$ and ending in $L_0$, and positive numbers
$\lambda_1,\ldots,\lambda_m$ such that
\begin{equation}\label{eq:path-decomposition}
  w(e)=\sum_{\substack{k\in[m]\\e\in E(\gamma_k)}}\lambda_k
  \quad(e\in E(\mathcal D)),\qquad
  \sum_{k=1}^m\lambda_k=1.
\end{equation}
Here $w(e)$ denotes the weight assigned in~\eqref{eq:layered-edge-weight}.
\end{claim}

\begin{proof}
Give every edge $e$ an initial weight equal to $w(e)$.
By Claim~\ref{clm:flow-balance}, the total residual weight of the incoming
edges initially equals that of the outgoing edges at every intermediate
vertex. Whenever the total outgoing residual weight from $L_{h-1}$ is
positive, we can therefore find a directed path $\gamma$ from $L_{h-1}$
to $L_0$ all of whose edges have positive residual weight.

Let $\lambda>0$ be the smallest residual weight of an edge of
$\gamma$. Record the pair $(\gamma,\lambda)$, and subtract $\lambda$
from the residual weight of every edge of $\gamma$.
All residual weights remain nonnegative, and at least one previously
positive edge now has residual weight zero.
At each intermediate vertex of $\gamma$, exactly one incoming and
one outgoing edge have been decreased by $\lambda$. Thus their total
residual weights remain equal. At every other intermediate vertex,
the equality is unchanged.

Repeat this operation whenever a vertex of $L_{h-1}$ has positive
outgoing residual weight. No residual edge weight increases, and
each operation makes at least one positive edge zero. Since
$\mathcal D$ has finitely many edges, only finitely many operations
are possible.
At termination, every edge has zero residual weight, by conservation
and nonnegativity applied successively down the layers.

Denote the recorded pairs by $(\gamma_k,\lambda_k)$, $k\in[m]$.
Since an edge loses precisely $\lambda_k$ whenever it belongs to
$\gamma_k$, we have proved the first equality in
\eqref{eq:path-decomposition}. Each operation decreases the total
outgoing residual weight from $L_{h-1}$ by $\lambda_k$.
That total is initially one by~\eqref{eq:total-top-weight} and finally
zero, so $\sum_k\lambda_k=1$.
\end{proof}

Consider one of these paths, and write its successive vertices as
\[
  (S_1,x_1),(S_2,x_2),\ldots,(S_h,x_h),
  \qquad |S_j|=h-j.
\]
There is a unique label $i_1$ such that
$S_1=[h]\setminus\{i_1\}$. For $j=1,\ldots,h-1$, let $i_{j+1}$ be the
label removed by the edge from $(S_j,x_j)$ to $(S_{j+1},x_{j+1})$.
Then
$S_{j+1}=S_j\setminus\{i_{j+1}\}$, and
$i_1,\ldots,i_h$ is a permutation of $[h]$.

\Needspace{5\baselineskip}
Every edge of the path has positive original weight.
In particular, its first vertex has positive outgoing weight, so,
by the definition of $\omega$ and Claim~\ref{clm:flow-balance}, we obtain
$0<\omega(S_1,x_1)=a_{i_1}(x_1)/h$.
Hence $x_1\in A_{i_1}$.
For $j=1,\ldots,h-1$, positivity of
$w_{S_j,i_{j+1}}(x_j,x_{j+1})$ in~\eqref{eq:layered-edge-weight}
implies $a_{i_{j+1}}(x_{j+1})>0$, and hence
$x_{j+1}\in A_{i_{j+1}}$.
Thus the sequence $x_1,\ldots,x_h$ contains a vertex from every $A_i$.
These vertices need not be distinct.

For $k\in[m]$, let
$L_k:=\sum_{e\in E(\gamma_k)}\operatorname{len}(e)$, where
$\operatorname{len}(e)=d_H(x,y)$ for
$e=(S,x)\to(S\setminus\{i\},y)$.
By Claim~\ref{clm:path-decomposition},
\begin{align}
  \sum_{k=1}^m\lambda_k L_k
  &=\sum_{k=1}^m\lambda_k
       \sum_{e\in E(\gamma_k)}\operatorname{len}(e)\notag\\*
  &=\sum_{e\in E(\mathcal D)}\operatorname{len}(e)
       \sum_{\substack{k\in[m]\\e\in E(\gamma_k)}}\lambda_k\notag\\*
  &=\sum_{e\in E(\mathcal D)}\operatorname{len}(e)w(e).
  \label{eq:weighted-path-length}
\end{align}
For a fixed nonempty proper subset $S$ with $|S|=q$, since
\[
  \begin{aligned}
    \sum_{i\in S}w_{S,i}(x,y)
    &=\sum_{i\in S}
      \frac{P_S(x,y)a_i(y)}{\binom{h}{q}\sum_{j\in S}a_j(y)}\\
    &=\frac{P_S(x,y)}{\binom{h}{q}}
      \frac{\sum_{i\in S}a_i(y)}{\sum_{j\in S}a_j(y)}\\
    &=\frac{P_S(x,y)}{\binom{h}{q}}
  \end{aligned}
  \qquad\left(\sum_{j\in S}a_j(y)>0\right),
\]
and both the first and the last expressions are zero when
$\sum_{j\in S}a_j(y)=0$, we obtain
\[
  \begin{aligned}
    \sum_{i\in S}\sum_{x,y}d_H(x,y)w_{S,i}(x,y)
    &=\sum_{x,y}d_H(x,y)\sum_{i\in S}w_{S,i}(x,y)\\
    &=\frac1{\binom{h}{q}}\sum_{x,y}d_H(x,y)P_S(x,y).
  \end{aligned}
\]
Since $P_S$ is an optimal transport matrix from $a_{S^c}$ to $a_S$,
\eqref{eq:optimal-marginals} gives
\[
  \sum_{x,y}d_H(x,y)P_S(x,y)=D_H(a_{S^c},a_S).
\]
\Needspace{13\baselineskip}
Since $\operatorname{len}(e)=d_H(x,y)$ and $w(e)=w_{S,i}(x,y)$,
\eqref{eq:weighted-path-length} and Lemma~\ref{lem:average-transport} give
\begin{align}
  \sum_{k=1}^m\lambda_k L_k
  &=\sum_{e\in E(\mathcal D)}\operatorname{len}(e)w(e)\notag\\*
  &=\sum_{q=1}^{h-1}\sum_{|S|=q}
       \sum_{i\in S}\sum_{x,y}d_H(x,y)w_{S,i}(x,y)\notag\\*
  &=\sum_{q=1}^{h-1}\frac1{\binom{h}{q}}
          \sum_{|S|=q}D_H(a_{S^c},a_S)\notag\\*
  &\leq\sqrt6\,h\ell
          \sum_{q=1}^{h-1}\frac1{\sqrt{q(h-q)}}.
  \label{eq:total-transport-length}
\end{align}

The summands in the last expression are unchanged when $q$ is
replaced by $h-q$. Pairing these terms, and allowing the middle
term to be counted twice when $h$ is even, gives
\begin{align*}
  \sum_{q=1}^{h-1}\frac1{\sqrt{q(h-q)}}
  &\leq2\sum_{q=1}^{\lfloor h/2\rfloor}
              \frac1{\sqrt{q(h-q)}}\\*
  &\leq2\sqrt{\frac2h}
              \sum_{q=1}^{\lfloor h/2\rfloor}\frac1{\sqrt q}\\*
  &\leq4\sqrt{\frac{2\lfloor h/2\rfloor}{h}}\leq4.
\end{align*}
The second inequality uses $h-q\geq h/2$ in this range.
For the third, sum
$1/\sqrt q\leq2(\sqrt q-\sqrt{q-1})$ from
$q=1$ to $\lfloor h/2\rfloor$.
It follows from~\eqref{eq:total-transport-length} that
$\sum_k\lambda_kL_k\leq4\sqrt6\,h\ell$.
Since all $\lambda_k$ are positive and $\sum_k\lambda_k=1$,
at least one $L_k$ is at most $4\sqrt6\,h\ell$.
Indeed, otherwise multiplying $L_k>4\sqrt6\,h\ell$ by $\lambda_k$
and summing would contradict this bound.

Choose such a path and let $x_1,\ldots,x_h$ be its second coordinates.
For every $j\in[h-1]$, choose a shortest path in $H$ from $x_j$ to
$x_{j+1}$. If $x_j=x_{j+1}$, use the path consisting of that single
vertex. Let $T$ be the union of the vertex sets of these paths.
Clearly, $H[T]$ is connected. The set $T$ contains every $x_j$, and hence meets every
$A_i$.
Starting with $x_1$, the $j$th path adds at most $d_H(x_j,x_{j+1})$
vertices. Consequently,
\[
  |T|\leq1+\sum_{j=1}^{h-1}d_H(x_j,x_{j+1})
       \leq1+4\sqrt6\,h\ell
       \leq(1+4\sqrt6)h\ell<11h\ell.
\]
Here $h\ell\geq1$ and $1+4\sqrt6<11$.
Thus $T$ satisfies the second conclusion.
\end{proof}

\clearpage
\section{Proof of the main theorem}\label{sec:separator}

The proof follows the separator framework of Alon, Seymour and
Thomas~\cite{alon-seymour-thomas}, with
Theorem~\ref{thm:hitting} providing the required connected set.

For convenience, we restate Theorem~\ref{thm:main} below.

\begin{mainrestatement}
Let $h\geq3$ be an integer, and let $G$ be a $K_h$-minor-free graph on
$n$ vertices. Then $G$ has a balanced separator $S$ such that
\[
  |S|\leq\min\{n,21h\sqrt n\}.
\]
\end{mainrestatement}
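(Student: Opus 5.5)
We follow the Alon--Seymour--Thomas framework. If $21h\sqrt n\geq n$, take $S=V(G)$, so assume $n>441h^2$. We maintain a \emph{partial minor model}: pairwise disjoint sets $C_1,\dots,C_k$ ($k<h$), each inducing a connected graph, pairwise adjacent, with $|C_j|\leq 11h\sqrt n$ for every $j$. Put $X:=C_1\cup\dots\cup C_k$, so $|X|\leq 11h^2\sqrt n$. We also keep a set $Y\subseteq V(G)\setminus X$ with $|Y|\leq 10h\sqrt n$ (a budget to be fixed) such that every component of $G-X-Y$ has at most $2n/3$ vertices except possibly one component $B$, and all other components of $G-X-Y$ together contain fewer than $n/3$ vertices. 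Start with $k=0$, $X=Y=\varnothing$ and $B$ a largest component of $G$; if no component exceeds $2n/3$ we are done. At each stage we either enlarge the model by one branch set, delete some $C_j$, or move vertices into $Y$. Since a potential function strictly decreases, the process terminates. It cannot reach $k=h$, and it stops with $S:=Y\cup\bigcup\{C_j:C_j\text{ has a neighbour in }B\}$ once $|B|\leq 2n/3$.

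Take $\ell:=\sqrt n/h$, which is at least $1$ because $n>441h^2$. This choice matches the bound $11h\ell=11\sqrt n$ per branch set. Let $H:=G[B]$ and $N:=|B|>2n/3$. If some $C_j$ has no neighbour in $B$, delete it from the model; this follows Alon--Seymour--Thomas, and the potential is the lexicographic quantity ($|B|$, then $k$). Otherwise, for $j\in[k]$ let $A_j:=N_G(C_j)\cap B$. For $k+1$ labels we have only $k$ sets, so we pad with $A_j:=B$. This leaves the issue that the sets $A_j$ may be small.

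Here we use a refinement in the style of Le. Apply Theorem~\ref{thm:hitting} with $h$ replaced by $k+1\leq h$ to the sets $A_j$ that have $|A_j|\geq N/(k+1)$, padding the rest with $B$. Sets $C_j$ with small $A_j$ are handled by the ASTh trick: if $|A_j|<N/h$, then $A_j$ itself costs little, so we add $A_j$ to $Y$ and drop $C_j$. Charging is the delicate part, and I would charge $|A_j|\leq|B|/h$ only when this shrinks $B$ suitably.

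Theorem~\ref{thm:hitting} now gives two outcomes. In outcome (2) we obtain a connected $T\subseteq B$ of size at most $11(k+1)\ell\leq 11\sqrt n$ meeting every $A_j$. This $T$ becomes the new branch set $C_{k+1}$, and $B$ is replaced by the largest component of $B-T$. In outcome (1) we obtain $W\subseteq B$ with $|W|\leq N/2$ and $|N_H(W)|\leq |W|/\ell=h|W|/\sqrt n$. Put $N_H(W)$ into $Y$ and replace $B$ by the larger side, which has size at most $N-|W|$; the cost per removed vertex is $h/\sqrt n$.

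The total $Y$-cost is bounded by $\frac{h}{\sqrt n}$ times the number of vertices removed from $B$, so it is at most $h\sqrt n$ times a constant. Together with the $C_j$ meeting $B$ (at most $h$ sets of size at most $11\sqrt n$, total at most $11h\sqrt n$), this gives $|S|\leq 21h\sqrt n$ once the constants are tracked.

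I expect the main obstacle to be the bookkeeping that guarantees $|A_j|\geq N/h$ in the hypothesis of Theorem~\ref{thm:hitting}, and that keeps the total charged to $Y$ at most $10h\sqrt n$. This includes choosing exactly when to discard a branch set whose neighbourhood in $B$ is small, adding that neighbourhood to $Y$ with a cost of at most $N/h$ charged against the shrinkage of $B$.
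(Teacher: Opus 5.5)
There is a genuine gap, and it is the step you flagged: a branch set $C_j$ whose neighbourhood $A_j=N_G(C_j)\cap B$ has fewer than $N/h$ vertices. You pad such $A_j$ with $B$, so the set $T$ from outcome (2) need not touch $C_j$. You are therefore forced to drop $C_j$ and, under your rule, to pay $|A_j|$ into $Y$. That cost can be as large as about $n/h$, which exceeds $21h\sqrt n$ as soon as $n\gg h^4$. It also cannot be charged against the shrinkage of $B$ at rate $1/\ell=h/\sqrt n$, as in outcome (1). Deleting $A_j$ typically leaves $B\setminus A_j$ connected, so $B$ loses only the $|A_j|$ vertices you paid for. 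Hence your moves cannot maintain the invariants $|C_j|\leq 11\sqrt n$ and $|Y|\leq 10h\sqrt n$. (You also state $|C_j|\leq 11h\sqrt n$ at the start; only the later bound $11(k+1)\ell\leq 11\sqrt n$ is compatible with the target.)

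The missing idea is to thicken the target sets instead of discarding them. Theorem~\ref{thm:hitting}, unlike the AST tree-or-separator lemma, needs $|A_i|\geq N/h$, and the paper supplies this by a ball-growing dichotomy. Let $B_r(C)$ be the set of vertices of $H$ within distance $r$ of $N_H(C)$.

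\emph{Case 1.} For some $r<\lceil\ell\rceil$ with $|B_r(C)|\leq v(H)/2$, the sphere $N_H(C\cup B_r(C))=B_{r+1}(C)\setminus B_r(C)$ has at most $13h\ell+(|C|+|B_r(C)|)/\ell$ vertices. Then $Z:=B_r(C)$ is absorbed into $C$ (modification (b)).

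\emph{Case 2.} Otherwise every such sphere has more than $13h\ell$ vertices, so $|B_{\lceil\ell\rceil}(C)|\geq\min\{v(H)/2,13n/h\}\geq v(H)/h$. Theorem~\ref{thm:hitting} is applied to these balls. The resulting $T_0$ is joined to each $N_H(C_i)$ by a path of length at most $\lceil\ell\rceil\leq 2\ell$, giving a new branch set with $|T|\leq 13h\ell$.

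Because absorption makes branch sets large, the invariant is weakened to $\min\{|C|,|N_H(C)|\}\leq 13h\ell+|C|/\ell$. The final separator then pays only for the smaller of $C$ and $N_H(C)$, and by disjointness of branch sets this totals at most $13h^2\ell+n/\ell$. Without this device or an equivalent one, your argument does not close.
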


\begin{proof}[Proof of Theorem~\ref{thm:main}]
If $0\leq n\leq h^2$, take $S:=V(G)$. Then
$|S|=n\leq h\sqrt n$, and $G-S$ has no components.
We may therefore assume that $n>h^2$. Put
$\ell:=\frac{\sqrt n}{h}>1$, so $n=h^2\ell^2$.
If every component of $G$ has at most $2n/3$ vertices, take
$S:=\varnothing$. Otherwise $G$ has a unique component of order
greater than $2n/3$, since two such components would together have
more than $n$ vertices.

For an induced subgraph $H$ of $G$ and a set $U\subseteq V(G)$, write
$N_H(U):=\{v\in V(H)\setminus U:
       \text{$v$ has a neighbor in $U$ in $G$}\}$.

We construct a sequence of triples $(H,X,\mathcal K)$.
In each triple, $H$ is a connected induced subgraph of $G$ with
$v(H)>2n/3$, $X$ is a subset of $V(G)\setminus V(H)$, and
$\mathcal K$ is a family of vertex sets satisfying the following
conditions:
\begin{enumerate}
\item[(i)] The sets in $\mathcal K$ are nonempty, pairwise disjoint
subsets of $V(G)\setminus(V(H)\cup X)$. For every $C\in\mathcal K$,
the graph $G[C]$ is connected and $N_H(C)\neq\varnothing$.
Moreover, $e_G(C,C')>0$ for all distinct $C,C'\in\mathcal K$.
\item[(ii)] If $uv\in E(G)$, $u\in V(H)$, and $v\notin V(H)$, then
$v\in X\cup\bigcup_{C\in\mathcal K}C$.
\item[(iii)] For every $C\in\mathcal K$,
$\min\{|C|,|N_H(C)|\}\leq13h\ell+\frac{|C|}{\ell}$.
\end{enumerate}
Initially, let $H$ be the component of $G$ of order greater than
$2n/3$, and take $X:=\varnothing$ and $\mathcal K:=\varnothing$.
There is no edge from this $H$ to another component of $G$, so
condition (ii) holds. Conditions (i) and (iii) have no sets $C$ to
check, and hence also hold.

The sets in $\mathcal K$ form the branch sets of a complete minor.
Since $G$ is $K_h$-minor-free, condition (i) implies
$|\mathcal K|\leq h-1$.

We use the following three modifications of a triple.
In each case, $R$ denotes the induced subgraph remaining from $H$,
and $X^+$ and $\mathcal K^+$ denote the updated set and family.
\begin{enumerate}
\item[(a)] Suppose there is a nonempty set $T\subseteq V(H)$ such that
$H[T]$ is connected, $|T|\leq13h\ell$, and
$T\cap N_H(C)\neq\varnothing$ for every $C\in\mathcal K$.
Put
\[
  R:=H-T,\qquad X^+:=X,\qquad
  \mathcal K^+:=\mathcal K\cup\{T\}.
\]
The last condition on $T$ ensures that it is adjacent to every
existing branch set.

\item[(b)] Suppose there are $C\in\mathcal K$ and a nonempty set
$Z\subseteq V(H)$ such that $|Z|\leq v(H)/2$ and $G[C\cup Z]$ is
connected. If
\[
  |N_H(C\cup Z)|\leq13h\ell+\frac{|C|+|Z|}{\ell},
\]
put
\[
  R:=H-Z,\qquad X^+:=X,\qquad
  \mathcal K^+:=(\mathcal K\setminus\{C\})\cup\{C\cup Z\}.
\]

\item[(c)] Suppose there is a nonempty set $W\subseteq V(H)$ such that
\[
  |W|\leq v(H)/2,\qquad |N_H(W)|\leq\frac{|W|}{\ell}.
\]
Put
\[
  R:=H-(W\cup N_H(W)),\qquad
  X^+:=X\cup N_H(W),\qquad \mathcal K^+:=\mathcal K.
\]
\end{enumerate}
If every component of $R$ has at most $2n/3$ vertices, stop.
Otherwise let $H'$ be the unique component of $R$ of order greater
than $2n/3$, and put
\[
  X':=X^+,\qquad
  \mathcal K':=\{C\in\mathcal K^+:N_{H'}(C)\neq\varnothing\}.
\]
The next triple is $(H',X',\mathcal K')$.

\begin{claim}\label{clm:preservation}
If a modification produces a component $H'$ of order greater than
$2n/3$, then $(H',X',\mathcal K')$ satisfies (i)--(iii).
\end{claim}

\begin{proof}
Since $R$ is an induced subgraph of $H$, its component $H'$ is a
connected induced subgraph of $G$. All updated branch sets and
$X'$ lie outside $H'$.
We first check that each of the modifications (a)--(c) preserves (i).
In (a), $T$ is disjoint from the old branch sets and from $X$, since
$T\subseteq V(H)$. It is connected and adjacent to every old branch
set by its defining conditions.
In (b), the same disjointness holds for $Z$, the set $C\cup Z$ is
connected, and it remains adjacent to every other branch set because
it contains $C$.
In (c), the added vertices of $X'$ belong to $V(H)$, so they are
disjoint from every branch set.
Deleting members of $\mathcal K^+$ does not affect disjointness,
connectivity, or pairwise adjacency of the remaining members.
Every member of $\mathcal K'$ has a neighbor in $H'$ by definition.

We next check that each of the modifications (a)--(c) preserves (ii).
In (a) and (b), the vertices removed from $H$ belong to a member
of $\mathcal K^+$. In (c), every removed vertex having a neighbor
in $R$ belongs to $N_H(W)\subseteq X^+$.
By the old condition (ii), every vertex outside $R$ with a neighbor
in $R$ therefore belongs to $X^+\cup\bigcup_{C\in\mathcal K^+}C$.
This property is preserved when passing to the component $H'$ of $R$
and discarding members of $\mathcal K^+$ with no neighbor in $H'$.

Finally, we check that each of the modifications (a)--(c) preserves (iii).
For the new branch set in (a), the required bound follows from
$|T|\leq13h\ell$.
For the enlarged branch set in (b),
$N_{H'}(C\cup Z)\subseteq N_H(C\cup Z)$,
so (b) gives
\[
  \min\{|C\cup Z|,|N_{H'}(C\cup Z)|\}
  \leq |N_H(C\cup Z)|\leq13h\ell+\frac{|C\cup Z|}{\ell}.
\]

Modification (c) clearly preserves (iii).
For every unchanged branch set $C$, we have
$N_{H'}(C)\subseteq N_H(C)$. Hence, by the old
condition (iii),
\[
  \min\{|C|,|N_{H'}(C)|\}
  \leq\min\{|C|,|N_H(C)|\}
  \leq13h\ell+\frac{|C|}{\ell}.\qedhere
\]
\end{proof}

The successive subgraphs $H$ are nested. In particular, the sets
$W$ used in distinct modifications of type (c) are pairwise
disjoint. Only modifications of type (c) enlarge $X$, and $X$ is
initially empty. Summing the neighborhood bound in (c) over these modifications
therefore gives, at every stage,
\begin{equation}\label{eq:X-bound}
  |X|\leq\sum_{\text{modifications of type (c)}}|N_H(W)|
  \leq\frac1\ell
       \sum_{\text{modifications of type (c)}}|W|
  \leq\frac n\ell.
\end{equation}

\begin{claim}\label{clm:available-modification}
Every triple $(H,X,\mathcal K)$ satisfying (i)--(iii) and
$v(H)>2n/3$ admits at least one modification of type (a), (b), or (c).
\end{claim}

\begin{proof}
If $\mathcal K=\varnothing$, choose any $v\in V(H)$ and put
$T:=\{v\}$. Then $H[T]$ is connected and
$|T|=1\leq13h\ell$. There are no branch sets whose neighborhoods
$T$ must meet, so (a) applies.
We may therefore assume that $\mathcal K\neq\varnothing$.
If the sets in (b) exist, use (b). Henceforth assume that no
modification of type (b) is possible.

Fix $C\in\mathcal K$. For every integer $r\geq0$, put
\[
  B_r(C):=\{v\in V(H):d_H(v,N_H(C))\leq r\}.
\]
The set $N_H(C)$ is nonempty by (i), so every $B_r(C)$ is nonempty.
By (i) and the definition of $B_r(C)$, the graph $G[C\cup B_r(C)]$
is connected.

Since $N_H(C)\subseteq B_r(C)$, no vertex of
$V(H)\setminus B_r(C)$ has a neighbor in $C$.
Clearly,
\[
  N_H(C\cup B_r(C))=B_{r+1}(C)\setminus B_r(C).
\]
Whenever $0\leq r<\lceil\ell\rceil$ and $|B_r(C)|\leq v(H)/2$,
the set $Z:=B_r(C)$ satisfies all the conditions in (b) except
possibly the bound on $|N_H(C\cup Z)|$.
Since (b) is impossible, that bound must fail. Hence
\begin{equation}\label{eq:ball-growth}
  |B_{r+1}(C)\setminus B_r(C)|
  >13h\ell+\frac{|C|+|B_r(C)|}{\ell}>13h\ell.
\end{equation}

If $|B_r(C)|>v(H)/2$ for some $r<\lceil\ell\rceil$, then
$|B_{\lceil\ell\rceil}(C)|>v(H)/2$, since these sets are nested.
Otherwise~\eqref{eq:ball-growth} applies for every
$r=0,\ldots,\lceil\ell\rceil-1$.
The sets $B_{r+1}(C)\setminus B_r(C)$ are pairwise disjoint, so
\[
  |B_{\lceil\ell\rceil}(C)|
  \geq\sum_{r=0}^{\lceil\ell\rceil-1}
           |B_{r+1}(C)\setminus B_r(C)|
  >13h\ell\lceil\ell\rceil\geq13h\ell^2.
\]
In both cases,
\begin{equation}\label{eq:large-neighborhood}
  |B_{\lceil\ell\rceil}(C)|
  \geq\min\{v(H)/2,13h\ell^2\}
  =\min\{v(H)/2,13n/h\}\geq v(H)/h.
\end{equation}
The equality uses $n=h^2\ell^2$. The last inequality follows
from $h\geq3$ and $v(H)\leq n$, since each of $v(H)/2$ and $13n/h$
is at least $v(H)/h$.

Write $\mathcal K=\{C_1,\ldots,C_s\}$, where
$1\leq s=|\mathcal K|\leq h-1$.
For $i\in[s]$, put $A_i:=B_{\lceil\ell\rceil}(C_i)$; for
$s<i\leq h$, put $A_i:=A_1$.
By~\eqref{eq:large-neighborhood}, every target set has size at least
$v(H)/h$. Apply Theorem~\ref{thm:hitting} to $H$ and these $h$ sets.
If its first conclusion holds, then (c) applies.

Otherwise there is a set $T_0\subseteq V(H)$ such that $H[T_0]$ is
connected, $|T_0|\leq11h\ell$, and $T_0$ meets every $A_i$.
For each $i\in[s]$, choose
$v_i\in T_0\cap B_{\lceil\ell\rceil}(C_i)$ and a shortest path $P_i$
in $H$ from $v_i$ to $N_H(C_i)$.
The definition of $B_{\lceil\ell\rceil}(C_i)$ gives
$|E(P_i)|\leq\lceil\ell\rceil\leq2\ell$, since $\ell>1$.
Put
\[
  T:=T_0\cup\bigcup_{i=1}^sV(P_i).
\]
Every $P_i$ starts in $T_0$, so $H[T]$ is connected.
Its other endpoint belongs to $N_H(C_i)$, so $T$ meets every
branch-set neighborhood. Since the starting vertex of $P_i$ is
already in $T_0$, the path adds at most $|E(P_i)|$ vertices.
The required bound for (a) follows:
\[
  |T|\leq |T_0|+\sum_{i=1}^s|E(P_i)|
  \leq11h\ell+2s\ell\leq13h\ell.\qedhere
\]
\end{proof}

Starting from the initial triple, choose an available modification
given by Claim~\ref{clm:available-modification}.
If it does not stop the construction, use the next triple
$(H',X',\mathcal K')$.
Claim~\ref{clm:preservation} permits the argument to be repeated.
In (a), the new branch set is adjacent to every old one, so adding
it cannot produce $h$ branch sets: those sets would form a $K_h$
minor in $G$. Each modification removes at least one vertex of $H$,
since $T$, $Z$, or $W$ is nonempty. Thus every next graph $H'$ has
strictly fewer vertices than $H$.
There can be at most $n$ such decreases. Since a modification is
available at every triple with $v(H)>2n/3$, the construction must
stop after finitely many modifications.

Let $(H,X_0,\mathcal K_0)$ be the triple immediately before the
stopping modification.
For each $C\in\mathcal K_0$, put $F_C:=C$ if
$|C|\leq|N_H(C)|$, and put $F_C:=N_H(C)$ otherwise.
Let $F_0:=\bigcup_{C\in\mathcal K_0}F_C$.
Thus $|F_C|=\min\{|C|,|N_H(C)|\}$.
By condition (iii),
\begin{equation}
  |F_0|
  \leq\sum_{C\in\mathcal K_0}|F_C|
  \leq13h\ell|\mathcal K_0|
       +\frac1\ell\sum_{C\in\mathcal K_0}|C|
   \leq13h^2\ell+\frac n\ell.
  \label{eq:F-bound}
\end{equation}
Here we used $|\mathcal K_0|\leq h-1$ and the disjointness of the
branch sets.

By (ii), deleting $X_0\cup F_0$ separates the remaining vertices of $H$
from all remaining vertices outside $H$.

Let $T$, $Z$, or $W$ denote the set chosen in that modification.
Define
\begin{equation}\label{eq:final-separator}
  S:=
  \begin{cases}
    X_0\cup F_0\cup T,&\text{in case (a)},\\
    X_0\cup F_0\cup N_H(C\cup Z),&\text{in case (b)},\\
    X_0\cup F_0\cup N_H(W),&\text{in case (c)}.
  \end{cases}
\end{equation}
We verify that $S$ is balanced.
The observation following~\eqref{eq:F-bound} shows that no edge
joins $V(H)\setminus S$ to $(V(G)\setminus V(H))\setminus S$.
Consequently, every component of $G-S$ lies entirely inside $H$
or entirely outside $H$.
Since $v(H)>2n/3$, there are fewer than $n/3$ vertices outside $H$,
so every component outside $H$ has fewer than $n/3$ vertices.

In case (a), the graph left from $H$ at the stopping modification is
$H-T$. By the stopping rule, every component of $H-T$ has at most
$2n/3$ vertices. Since $T\subseteq S$, every component of $G-S$
inside $H$ is contained in one of these components.

In case (b), every component of $H-N_H(C\cup Z)$ is contained either in
$Z$ or in $V(H)\setminus Z$. A component inside $Z$ has at most
$|Z|\leq v(H)/2\leq n/2$ vertices.
A component outside $Z$ is contained in a component of
$H-Z$, all of which have at most $2n/3$ vertices by the stopping rule.
Further deletion of $X_0\cup F_0$ cannot increase any component size.

In case (c), no edge joins $W$ to
$V(H)\setminus(W\cup N_H(W))$, by the definition of $N_H(W)$.
After deleting $N_H(W)$, a component inside $W$ has at most
$|W|\leq v(H)/2\leq n/2$ vertices.
Every other component is contained in a component of
$H-(W\cup N_H(W))$, which has at most $2n/3$ vertices by the
stopping rule. Again, the other vertices of $S$ can only decrease
component sizes.
Thus $S$ is balanced in all three cases.

By~\eqref{eq:F-bound} and~\eqref{eq:X-bound},
\[
  |X_0\cup F_0|\leq13h^2\ell+\frac{2n}{\ell}.
\]
The additional set in~\eqref{eq:final-separator} has size at most
$13h\ell+n/\ell$.
In case (a), this follows from $|T|\leq13h\ell$.
The neighborhood bound in (b) gives this estimate because
$C$ and $Z$ are disjoint subsets of $V(G)$, so $|C|+|Z|\leq n$.
The bounds in (c) give
$|N_H(W)|\leq |W|/\ell\leq n/(2\ell)$.
Consequently, using $\ell=\sqrt n/h$,
\[
  \begin{aligned}
  |S|
  &\leq13h(h+1)\ell+\frac{3n}{\ell}\\
  &=13(h+1)\sqrt n+3h\sqrt n\\
  &=\left(16+\frac{13}{h}\right)h\sqrt n\\
  &\leq\frac{61}{3}h\sqrt n<21h\sqrt n,
  \end{aligned}
\]
where the penultimate inequality uses $h\geq3$.
Also $|S|\leq n$, since $S\subseteq V(G)$.
This proves the theorem.
\end{proof}

\section{Acknowledgments}

\noindent\textbf{Statement of AI Use.}
The proof in this manuscript was developed under the author's guidance
with the assistance of ChatGPT 6pro. The author has checked every formula
and theorem by hand and used Codex to assist with writing.
GPT 6.1 Sol (Max) was also used to check the proof repeatedly and to
generate numerous examples for verification.

\end{document}